\documentclass[a4paper,reqno]{amsart}

\textheight 220mm
\textwidth 150mm
\hoffset -16mm
\usepackage{amssymb}
\usepackage{amstext}
\usepackage{amsmath}
\usepackage{amscd}
\usepackage{amsthm}
\usepackage{amsfonts}
\usepackage{enumerate}
\usepackage{graphicx}
\usepackage{latexsym}
\usepackage{mathrsfs}
\usepackage{mathtools}
\usepackage[all]{xy}
\xyoption{all}

\usepackage{pstricks}
\usepackage{lscape}
\usepackage{comment}

\newtheorem{theorem}{Theorem}[section]

\newtheorem{proposition}[theorem]{Proposition}
\newtheorem{definition-proposition}[theorem]{Definition-Proposition}

\theoremstyle{definition}
\newtheorem{definition}[theorem]{Definition}

\newtheorem{remark}[theorem]{Remark}
\newtheorem{example}[theorem]{Example}

\newcommand{\Ext}{\operatorname{Ext}\nolimits}

\newcommand{\Hom}{\operatorname{Hom}\nolimits}
\newcommand{\End}{\operatorname{End}\nolimits}

\renewcommand{\mod}{\mathsf{mod}\hspace{.01in}}

\newcommand{\add}{\mathsf{add}\hspace{.01in}}

\newcommand{\tilt}{\mbox{\rm tilt}\hspace{.01in}}
\newcommand{\sttilt}{\mbox{\rm s$\tau$-tilt}\hspace{.01in}}

\newcommand{\gldim}{\mathop{{\rm gl.dim}}\hspace{.01in}}
\newcommand{\RHom}{\mathbf{R}\strut\kern-.2em\operatorname{Hom}\nolimits}

\numberwithin{equation}{section}

\usepackage{paralist}

\hoffset-9mm

\def\id{\mathop{\rm id}\nolimits}
\def\pd{\mathop{\rm pd}\nolimits}
\def\add{\mathop{\rm add}\nolimits}

\def\gldim{\mathop{\rm gldim}\nolimits}

\begin{document}
\title{Classifying tilting modules over the Auslander algebras of Radical square zero Nakayama algebras}
\thanks{2000 Mathematics Subject Classification: 16G10,16E10}
\thanks{Keywords: Auslander algebra, tilting module, support $\tau$-tilting module }
 \thanks{ The author is supported by NSFC
(Nos. 11571164 and 11671174)}

\author{Xiaojin Zhang}
\address{X. Zhang: 1. School of Mathematics and Statistics, Jiangsu Normal University, Xuzhou, 221116, P. R. China.}
\email{xjzhang@jsnu.edu.cn, xjzhangmaths@163.com}
\maketitle
\begin{abstract} Let $\Lambda$ be a radical square zero Nakayama algebra with $n$ simple modules and let $\Gamma$ be the Auslander algebra of $\Lambda$. Then every indecomposable direct summand of a tilting $\Gamma$-module is either simple or projective. Moreover, if $\Lambda$ is self-injective, then the number of tilting $\Gamma$-modules is $2^n$; otherwise, the number of tilting $\Gamma$-modules is $2^{n-1}$.
\end{abstract}

\section{Introduction}

Tilting theory has been very essential in the representation theory of finite dimensional algebras since 1970S (see \cite{B,BGP,BrB,HaR}and so on). The most important notion of tilting theory is tilting module. So it is interesting to classify tilting modules over a given algebra. For the recent development of this topics, we refer to \cite {AsSS,AnHK,CX,CrS, NRTZ, PS, IZ1}.

Auslander algebras are also important in the representation theory of finite dimensional algebras \cite{AuRS}. Tilting modules over Auslander algebras have got a lot of attention. In 2000, Br\"{u}stle and his coauthors \cite{BHRR} studied the tilting modules over the Auslander algebra of $K[x]/(x^n)$ and showed that the number of tilting modules is $n!$. Indeed, this algebra is an Auslander algebra of a Nakayama local algebra. In 2008, Kajita \cite{K} studied the number of tilting modules over the Auslander algebra of hereditary algebra of Dynkin type in his master's thesis. In 2014, Adachi, Iyama and Reiten \cite{AIR} introduced the notions of $\tau$-tilting modules as generalizations of tilting modules. For more details of $\tau$-tilting modules, we refer to \cite{DIJ,IJY,J,Mi,We,Z1,Zi1,Zi2} and the references there. In 2016, Iyama and Zhang \cite{IZ1,Ts} revisit the tilting modules over the Auslander algebra of $K[x]/(x^n)$ and built a bijection between the tilting modules over the Auslander algebra of $K[x]/(x^n)$ and the support $\tau$-tilting modules over the corresponding preprojective algebra of $A_{n-1}$ \cite {Mi}. All algebras studied in \cite{K,BHRR,IZ1} are the Auslander algebras of some Nakayama algebras. For an arbitrary Auslander algebra $\Lambda$, finding all the tilting $\Lambda$-modules seems difficult.

On the other hand, radical square zero algebras have gained a lot of attention in the representation theory of finite-dimensional algebras. Auslander, Reiten and Smal$\phi$ \cite{AuRS} showed that these algebras are stable equivalent to hereditary algebras. Chen \cite{C} showed that these algebras are either self-injective or CM-free (all finitely generated Gorenstein projective modules are projective). The Hochschild cohomology algebra of radical square zero algebras are studied by Cibils \cite{Ci}. In 2018, Wang \cite {W} studied the Tate-Hochschild cohomology of radical square zero algebras. Moreover, $\tau$-tilting modules over these algebras have studied by Adachi \cite{A2} and the author \cite{Z2}. Aoki \cite{Ao} classified torsion classes over algebras of radical square zero. It is showed by Auslander that there is a bijection between Auslander algebras and algebras of finite representation type. It is natural to consider the Auslander algebras of radical square zero algebras of finite representation type.

In this paper, we focus on tilting modules over the Auslander algebras of radical square zero Nakayama algebras and show the following theorem.
\begin{theorem} Let $\Lambda$ be a radical square zero Nakayama algebra with $n$ simple modules and let $\Gamma$ be the Auslander algebra of $\Lambda$. If $\Lambda$ is self-injective, then the number of tilting $\Gamma$-modules is $2^n$; otherwise, the number of tilting $\Gamma$-modules is $2^{n-1}$.
\end{theorem}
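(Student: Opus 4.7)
The plan is to exploit the very simple structure of the Auslander--Reiten (AR) quiver of $\Lambda$. I would first fix the presentation $\Lambda\cong KQ/\rad^2$ with $Q$ the cyclic quiver $C_n$ (self-injective case) or the linearly oriented $A_n$ (otherwise), and note that the indecomposable $\Lambda$-modules are the $n$ simples $S_1,\ldots,S_n$ together with the length-two projectives $P_1,\ldots,P_n$, subject to $P_n=S_n$ in the non-self-injective case. The AR quiver is a single oriented $2n$-cycle whose irreducible maps are $S_{i+1}\to P_i\to S_i$ in the self-injective case and a zigzag line of $2n-1$ vertices in the non-self-injective case; in either case every vertex has at most one incoming and one outgoing irreducible map, so each almost split sequence has indecomposable middle term. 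The Auslander algebra $\Gamma=\End_\Lambda(M)^{\op}$ is then the path algebra of the opposite of the AR quiver modulo the mesh relations, one at each non-projective vertex. Consequently $\Gamma$ is a monomial string algebra with cyclic or linear underlying graph, and its indecomposable modules are strings of length at most three.

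The heart of the argument is the structural claim that every indecomposable summand of a tilting $\Gamma$-module is either simple or projective. I would establish this by computing projective dimensions directly from the string description. Applying $\Hom_\Lambda(M,-)$ to the almost split sequence $0\to\tau Y\to E(Y)\to Y\to 0$ in $\mod\Lambda$ (where $Y$ is non-projective and $E(Y)$ is the unique source of irreducible maps to $Y$) yields the four-term exact sequence
\[
0\longrightarrow P^\Gamma_{\tau Y}\longrightarrow P^\Gamma_{E(Y)}\longrightarrow P^\Gamma_{Y}\longrightarrow S^\Gamma_Y\longrightarrow 0,
\]
the minimal projective resolution of $S^\Gamma_Y$, so that $\pd_\Gamma S^\Gamma_Y=2$. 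A parallel computation shows that every non-simple non-projective indecomposable $\Gamma$-module (necessarily a length-two string whose top sits at a vertex corresponding to a projective $\Lambda$-module) also has projective dimension two. Since a classical tilting module has $\pd\leq 1$, these indecomposables are excluded as summands, leaving only the $\Gamma$-projectives and the simples $S^\Gamma_{P_j}$ for $P_j$ a projective $\Lambda$-module.

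For the enumeration, truncating the resolution above gives the exchange sequence
\[
0\longrightarrow P^\Gamma_{S_i}\longrightarrow P^\Gamma_{E(S_i)}\longrightarrow S^\Gamma_{E(S_i)}\longrightarrow 0
\]
for each non-injective simple $S_i$ of $\Lambda$, where $E(S_i)$ is a projective $\Lambda$-module (the middle term of the almost split sequence starting at $S_i$). There are $n$ such non-injective simples in the self-injective case and $n-1$ in the non-self-injective case, and the exchange sequence effects a binary flip $P^\Gamma_{S_i}\mapsto S^\Gamma_{E(S_i)}$. Independent flipping of the canonical tilting module $\Gamma$ therefore yields $2^n$ candidates in the self-injective case and $2^{n-1}$ in the non-self-injective case. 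I would verify each candidate is tilting by reading the required projective resolution of each chosen simple and the $\add T$-coresolution of each removed projective directly from the exchange sequences, and check that different flipping patterns yield different summand sets. Conversely, the structural result forces any tilting $\Gamma$-module to contain every $\Gamma$-projective not of the form $P^\Gamma_{S_i}$ for non-injective $S_i$, and to include exactly one of $P^\Gamma_{S_i}$ and $S^\Gamma_{E(S_i)}$ at each such index (by the summand count and the $\Ext^1$-nonvanishing within the exchange pair), giving a bijection between tilting modules and flipping patterns.

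I expect the main obstacle to be the projective dimension computation: in the self-injective case the cyclic $\Gamma$-quiver makes it plausible a priori that syzygies propagate indefinitely around the cycle, and one must use the mesh relation at each non-projective vertex to force the second syzygy to be indecomposable projective. This is the key miracle responsible for the uniform bound $\gldim\Gamma=2$ together with the sharp projective-dimension dichotomy above.
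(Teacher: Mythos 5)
Your proposal is essentially correct, but it takes a genuinely different route from the paper. The paper does not classify summands by hand: it invokes the bijection of Iyama and Zhang (Theorem 2.5, from [IZ2]) between tilting modules over an Auslander $1$-Gorenstein algebra $\Gamma$ and support $\tau$-tilting modules over $\Gamma/(e)$, where $e\Gamma$ is the additive generator of the projective-injective modules; from the explicit quiver of $\Gamma$ (Proposition 3.1) the quotient $\Gamma/(e)$ is semisimple with $n$ (resp.\ $n-1$) simple modules, and support $\tau$-tilting modules over a semisimple algebra are exactly the $2^m$ direct sums of pairwise non-isomorphic simples (Proposition 3.3). You instead work entirely inside $\mod\Gamma$: the projective-dimension dichotomy (only $\Gamma$-projectives and the simples at vertices of projective $\Lambda$-modules have $\pd\le 1$) follows from the four-term sequences obtained by applying $\Hom_\Lambda(M,-)$ to almost split sequences, and the count comes from the $n$ (resp.\ $n-1$) exchange pairs $(P^\Gamma_{S_i},S^\Gamma_{E(S_i)})$, where the non-split exchange sequence forbids both members of a pair and the requirement $|T|=|\Gamma|$ then forces all projective-injectives plus exactly one member of each pair. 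This reproves the paper's Proposition 3.2 by a cleaner mechanism (projective dimension, rather than the embedding of tilting modules into projectives via [Z1] and [IwS]), is self-contained, and produces the explicit list of tilting modules together with the mutation structure; the paper's route is shorter and situates the count inside a general reduction theorem, which also explains the answer conceptually as subsets of simples of a semisimple quotient. Two small points to tidy in your write-up: in the non-self-injective case the AR quiver is a linearly oriented line (not a zigzag); and at the vertex of the simple projective $\Lambda$-module in that case the corresponding $\Gamma$-simple is itself projective (since $\Hom_\Lambda(M,S_1)$ is one-dimensional), which is exactly what guarantees that the admissible simple summands are exhausted by the $n-1$ exchange partners and no extra flip position arises; the deferred rigidity checks for the flipped candidates are routine from the uniserial structure of the $\Gamma$-projectives but should be recorded.
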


We show the organization of this paper as follows:
In Section 2, we recall some preliminaries and show the main result.

Throughout this paper, all algebras are finite-dimensional algebras and all modules are finitely generated right modules. For a tilting module, we mean the classical tilting module. We use $\tau$ to denote the Auslander-Reitein translation functor.

\bigskip
\noindent{\bf Acknowledgement} This note can be seen as an application of the published result proved by Iyama and the author. The author would like to thank Iyama for useful suggestions and allowing him to write this note alone.

\section{Main result and its proof}

In this section, we firstly recall some preliminaries on tilting modules, $\tau$-tilting modules and Auslander algebras. Then we prove the main result.

For an algebra $\Lambda$, denote by $\gldim\Lambda$ the global dimension of $\Lambda$ and by $I^i(\Lambda)$ the $(i+1)$-th term of minimal injective presentation of $\Lambda$. We have the following definition of Auslander algebras (resp. Auslander $1$-Gorenstein algebras).

\begin{definition}\label{2.2} An algebra $\Lambda$ is called an Auslander algebra if $\gldim\Lambda\leq2$ and $I^0(\Lambda), I^1(\Lambda)$ are projective. An algebra $\Lambda$ is called an Auslander $1$-Gorenstein if $I^0(\Lambda)$ is projective (\cite{IwS}).
\end{definition}

Recall that an algebra $R$ is of finite representation type if $\mod R=\add M$. Now we can give the definition of Auslander algebras.
Let $R$ be an algebra of finite representation type and $M$ be the additive generator of $\mod R$. We call $\Lambda=\End_R M$ the Auslander algebra of $R$.

For any $M\in\mod\Lambda$, denote by $\pd_\Lambda M$ (resp. $\id_\Lambda M$ ) the projective dimension (resp. injective dimension) of $M$. Denote by $|M|$ the number of non-isomorphic indecomposable direct summand of $M$. Now we recall the definition of tilting modules.

\begin{definition}\label{2.3} Let $\Lambda$ be an algebra and $T\in \mod\Lambda$. $T$ is called a tilting module if the following are satisfied:
\begin{itemize}
\item[\rm(1)] $\pd_\Lambda T\leq 1$

\item[\rm(2)] $\Ext_{\Lambda}^i(T,T)=0$ for $i\geq 1$.

\item[\rm(3)] $|T|=|\Lambda|$.
\end{itemize}
\end{definition}

In the following we recall the definition of $\tau$-tilting modules introduced in \cite{AIR}.

\begin{definition}\label{2.4} Let $\Lambda$  be an algebra and $M\in\mod\Lambda$. $M$ is called $\tau$-rigid if $\Hom_{\Lambda}(M,\tau M)=0$. $M$ is $\tau$-tilting if $M$ is $\tau$-rigid and $|M|=|\Lambda|$. Moreover $M$ is call a support $\tau$-tilting module if it is a $\tau$-tilting module over $\Lambda/(e)$, where $e$ is an idempotent of $\Lambda$.
\end{definition}

The following results in \cite{IZ2} shows the connection between the tilting modules over Auslander 1-Gorenstein algebras $\Lambda$ and support $\tau$-tilting modules over the factor algebra of $\Lambda$.

\begin{theorem}\label{2.5} Let $\Lambda$ be an Auslander $1$-Gorenstein algebra and $e
$ be an idempotent such that $e\Lambda$ is the additive generator of projective-injective modules. Then there is a bijection between the set
$\tilt\Lambda$ of tilting modules over $\Lambda$ and the set $\sttilt\Lambda/(e)$ of support $\tau$-tilting modules over $\Lambda/(e)$.
\end{theorem}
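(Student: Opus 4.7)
The statement is the tilting/support-$\tau$-tilting correspondence of Iyama--Zhang \cite{IZ2}. My plan is to establish the bijection in three steps: (i) show every tilting $\Lambda$-module contains $e\Lambda$ as a direct summand, (ii) restrict to a bijection of functorially finite torsion classes between $\mod\Lambda$ and $\mod\Lambda/(e)$, and (iii) apply the Adachi--Iyama--Reiten correspondence \cite{AIR}. For step (i), let $T\in\tilt\Lambda$. Each indecomposable summand of $e\Lambda$ is injective, so $\Ext^1_\Lambda(T,e\Lambda)=0$, placing $e\Lambda$ inside the torsion class $\Fac T=\{X\in\mod\Lambda:\Ext^1_\Lambda(T,X)=0\}$. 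A resulting surjection $T^k\twoheadrightarrow e\Lambda$ splits because $e\Lambda$ is projective, so $e\Lambda\in\add T$.

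For step (ii), I would use the classical assignment $T\mapsto\Fac T$, which identifies basic tilting $\Lambda$-modules with those functorially finite torsion classes of $\mod\Lambda$ whose Ext-projective generator has projective dimension $\le 1$; by step (i) each such class contains $\add e\Lambda$. The key claim is that the restriction $\mathcal T\mapsto\mathcal T\cap\mod\Lambda/(e)$ is a bijection between these ``tilting'' torsion classes of $\mod\Lambda$ and all functorially finite torsion classes of $\mod\Lambda/(e)$. The inverse map assigns to $\mathcal T'\subseteq\mod\Lambda/(e)$ the unique tilting torsion class of $\mod\Lambda$ whose intersection with $\mod\Lambda/(e)$ equals $\mathcal T'$; existence is guaranteed by the Auslander $1$-Gorenstein hypothesis, which provides the projective-dimension control on the Ext-projective generator.

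For step (iii), the Adachi--Iyama--Reiten theorem \cite[Theorem~2.7]{AIR} identifies functorially finite torsion classes of $\mod\Lambda/(e)$ with basic support $\tau$-tilting $\Lambda/(e)$-modules via $\mathcal T'\mapsto P(\mathcal T')$, the direct sum of the indecomposable Ext-projective objects of $\mathcal T'$. Composing with step (ii) yields the desired bijection $\tilt\Lambda\longleftrightarrow\sttilt\Lambda/(e)$.

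The main obstacle is the surjectivity in step (ii): given a functorially finite torsion class $\mathcal T'\subseteq\mod\Lambda/(e)$ with Ext-projective generator $M$, one must construct a tilting $\Lambda$-module $T=e\Lambda\oplus\widetilde M$ with $\Fac T\cap\mod\Lambda/(e)=\mathcal T'$. The correct lift $\widetilde M$ is \emph{not} the naive restriction-of-scalars lift of $M$ (which can have projective dimension as large as $\gldim\Lambda$ and hence exceed $1$, e.g.\ for $\Lambda=\End_{K[x]/(x^2)}(K\oplus K[x]/(x^2))$ the simple $\Lambda/(e)$-module has projective dimension $2$ over $\Lambda$) but rather a refined lift specifically designed to have projective dimension $\le 1$. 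The Auslander $1$-Gorenstein assumption $I^0(\Lambda)\in\add e\Lambda$ is precisely the input that makes this refined lift available: with the injective envelope of $\Lambda$ being a projective module in $\add e\Lambda$, the first syzygy of $\widetilde M$ can be controlled so that the required projective-dimension bound holds. Steps (i) and (iii) are then formal consequences.
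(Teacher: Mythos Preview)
The paper does not prove Theorem~\ref{2.5}; it is quoted verbatim from \cite{IZ2} and used as a black box. So there is no ``paper's own proof'' to compare against, and your proposal should be read as an attempt to reprove the cited result.

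Your three-step strategy is the right architecture, and step~(i) and step~(iii) are correct as written. The genuine gap is step~(ii), and you have identified it yourself without closing it. Two things are missing. First, injectivity of $\mathcal T\mapsto\mathcal T\cap\mod\Lambda/(e)$ on tilting torsion classes is asserted implicitly but never argued: you need to show that a tilting torsion class is determined by its intersection with $\mod\Lambda/(e)$, which is not automatic even knowing $e\Lambda\in\mathcal T$. Second, and more seriously, the ``refined lift'' is a placeholder, not a construction. Saying that the Auslander $1$-Gorenstein hypothesis ``makes this refined lift available'' is exactly the statement to be proved; you have relocated the difficulty rather than resolved it. In your own example the support $\tau$-tilting module $S(1)$ over $\Lambda/(e)\cong K$ must lift to the tilting module $P(1)\oplus P(2)$, so the lift of $S(1)$ is $P(1)$, not $S(1)$; what mechanism produces $P(1)$ from $S(1)$ in general? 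The actual argument in \cite{IZ2} supplies this: one direction is $T\mapsto T\otimes_\Lambda\Lambda/(e)$, and the inverse is built from a support $\tau$-tilting pair by a concrete construction that uses the $1$-Gorenstein condition to force $\pd\le 1$; the torsion-class language can be made to work but only after that module-level construction is in hand. Until you specify the lift and verify both $\pd_\Lambda\widetilde M\le 1$ and $\Fac(e\Lambda\oplus\widetilde M)\cap\mod\Lambda/(e)=\mathcal T'$, step~(ii) remains a restatement of the theorem.
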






 Let $\Gamma$ be the Auslander algebra of a radical square zero Nakayama algebra $\Lambda$ with $n$ simple modules. Now we need to show the structure of $\Gamma$.

 \begin{proposition}\label{3.1} Let $\Gamma$ be the Auslander algebra of a radical square zero Nakayama algebra $\Lambda$ with $n$ simple modules.
 \begin{itemize}
\item[\rm(1)] If $\Lambda$ is not self-injective, then $\Gamma$ is given by the quiver $Q_1:$ $$\xymatrix{1&2\ar[l]_{a_2}&3\ar[l]_{a_3}&\cdots \ar[l]&2n-2\ar[l]_{a_{2n-2}}&2n-1\ar[l]_{a_{2n-1}}}$$ with the relations: $a_{2k-1}a_{2k-2}=0$ for $1\leq k\leq n$;
\item[\rm(2)] If $\Lambda$ is self-injective, then $\Gamma$ is given by the quiver $Q_2:$
$$\xymatrix{
&2n\ar[ld]_{a_{2n}}&1\ar[l]_{a_1}&\\
2n-1\ar[d]_{a_{2n-1}}&&&2\ar[ul]_{a_2}\\
2n-2&&&3\ar[u]_{a_3}\\
&5\ar@{.}[lu]\ar[r]_{a_5}&4\ar[ur]_{a_4}& }$$
with the relations: $a_{2k}a_{2k-1}=0$ for $1\leq k\leq n$.
\end{itemize}
\end{proposition}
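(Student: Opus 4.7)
The plan is to recover $\Gamma$ as the path algebra of the Auslander--Reiten quiver of $\Lambda$ modulo the mesh relations; the proposition then reduces to an explicit computation of this AR quiver in each case.

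First, I would invoke the classification of basic connected Nakayama algebras: $\Lambda$ is either $k\vec{A}_n/\rad^2$ in case (1) (where $\vec{A}_n$ denotes the linear quiver $1\to 2\to\cdots\to n$) or $k\widetilde{A}_{n-1}/\rad^2$ in case (2) (the cyclic quiver on $n$ vertices). Because $\rad^2\Lambda=0$, every indecomposable $\Lambda$-module has radical length at most two and is therefore either a simple module $S_i$ or the indecomposable projective $P_i$ with $P_i/\soc P_i\cong S_i$ and $\soc P_i\cong S_{i+1}$ (indices mod $n$ in case (2)). Counting gives exactly $2n-1$ indecomposables in case (1) and $2n$ in case (2), matching the vertex counts of $Q_1$ and $Q_2$.

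Next, I compute the almost split sequences. For each non-projective simple $S_i$ the short exact sequence $0\to S_{i+1}\to P_i\to S_i\to 0$ coming from the socle--top filtration of $P_i$ is the AR sequence for $S_i$, so $\tau S_i\cong S_{i+1}$ and each mesh has a single indecomposable middle term with irreducible maps given by the canonical inclusion and projection. Assembling these meshes shows that the AR quiver of $\Lambda$ is the linear quiver $S_n\to P_{n-1}\to S_{n-1}\to\cdots\to P_1\to S_1$ in case (1) and a $2n$-cycle alternating simples and projectives in case (2).

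Finally, since $\Gamma=\End_\Lambda M$ for an additive generator $M$, its Gabriel quiver (with appropriate orientation convention) coincides with the AR quiver of $\Lambda$ and its relations are the mesh relations (see \cite{AuRS}). Labelling $2k-1\leftrightarrow S_k$ and $2k\leftrightarrow P_k$ identifies the AR arrows with the arrows $a_i$ appearing in $Q_1$ and $Q_2$. Because each mesh has a unique middle term, the corresponding mesh relation is the single length-two path $S_{i+1}\to P_i\to S_i$ being zero, which translates in $a_i$-notation to $a_{2k-1}a_{2k-2}=0$ in case (1) and $a_{2k}a_{2k-1}=0$ (indices mod $2n$) in case (2), as asserted. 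The main obstacle is the bookkeeping of conventions (direction of arrows in $\End(M)$ versus $\End(M)^{\op}$, order of composition) carefully enough that the mesh relations line up with the displayed relations; once this is fixed, the cyclic case produces exactly one additional mesh relation (at $S_n$) over the linear case, reflecting that no simple $\Lambda$-module is projective when $\Lambda$ is self-injective.
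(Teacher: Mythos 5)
Your proposal is correct, and it supplies the structure that the paper's one-line proof (``straight calculation'') omits: identify the indecomposables of a radical square zero Nakayama algebra (the $n$ simples plus the length-two projectives, $2n-1$ resp.\ $2n$ in total), observe that each length-two projective is projective--injective so that $0\to \soc P_i\to P_i\to P_i/\soc P_i\to 0$ is the almost split sequence ending at each non-projective simple, and then read off $\Gamma$ as the mesh category of the resulting AR quiver. Two small points deserve attention if you write this up. First, while the mesh relations certainly \emph{hold} in $\End_\Lambda(M)$ (consecutive maps in an exact sequence compose to zero) and the irreducible maps generate $\rad\End_\Lambda(M)$ modulo its square, the claim that the mesh relations generate \emph{all} relations is exactly the statement that $\Lambda$ is standard; here it is cleanest to close the argument with a dimension count: the quiver with the stated relations has dimension $(2n-1)+(2n-2)+(n-2)=5n-5$ in case (1) and $2n+2n+n=5n$ in case (2), and a direct computation of $\sum\dim\Hom$ between the indecomposables ($\Hom(S_i,S_j)$, $\Hom(P_i,P_j)$, $\Hom(P_i,S_j)$, $\Hom(S_j,P_i)$) gives the same totals, so the natural surjection from the path algebra modulo mesh relations onto $\Gamma$ is an isomorphism. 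Second, with your labelling $2k-1\leftrightarrow S_k$, $2k\leftrightarrow P_k$ the mesh relations in the cyclic case come out as the paths through the even-numbered vertices, whereas the displayed relations $a_{2k}a_{2k-1}=0$ in $Q_2$ run through the odd-numbered ones; this is only a rotation of the vertex labels (the two presentations are isomorphic), but since you flagged the bookkeeping yourself, it is worth fixing the labelling so that it matches the statement verbatim. With these two points made explicit, your argument is complete and, if anything, more informative than the proof in the paper.
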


\begin{proof} By using straight calculation, one gets the quiver of $\Gamma$.
\end{proof}

 In the rest of this section, we assume that $\Lambda$ and $\Gamma$ are as in Proposition \ref{3.1}.

 For a module $T\in \mod \Lambda$, denote by $Gen(T)$ the subcategory of $\mod\Lambda$ generated by $T$. For two tilting modules $T_1$ and $T_2$, we call $T_1\leq T_2$ if $Gen(T_1)\subseteq Gen(T_2)$. Now we show the properties of tilting modules over $\Gamma$.

\begin{proposition}\label{3.2} Let $T$ be an arbitrary tilting module over $\Gamma$.
 \begin{itemize}
 \item [\rm(1)] Then any indecomposable direct summand $M$ of $T$ is either projective or a simple socle of an indecomposable projective-injective $\Gamma$-module.
 \item[\rm(2)] For any indecomposable projective and non-injective direct summand $P$ of $T$, the mutation over $P$ is given by the exact sequence $0\rightarrow P\rightarrow I^0({\rm soc}P)\rightarrow S\rightarrow0$ with $S$ simple.

 \item[\rm (3)] The minimal tilting module is a direct sum of projective-injective modules and simple modules of projective dimension $1$.
 \end{itemize}
\end{proposition}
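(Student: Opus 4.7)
The plan is to exploit the explicit description of $\Gamma$ as a bound path algebra given in Proposition~\ref{3.1}: list every indecomposable $\Gamma$-module by hand and verify the three claims by direct inspection. Since $\Gamma$ has only monomial relations on a type-$A$ (or cyclic) quiver, its indecomposables are the ``interval modules'' compatible with the relations---namely the simples, the length-$2$ uniserials between adjacent vertices, and the length-$3$ uniserials not obstructed by any relation; the last of these are exactly the non-trivial projective-injectives.

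For Part~(1) I would use $\pd_\Gamma T\le 1$ to conclude that every indecomposable summand $M$ of $T$ satisfies $\pd_\Gamma M\le 1$. A short case analysis of projective covers and first syzygies (routine, since all indecomposables are uniserial of length at most $3$) shows that the non-projective indecomposables of projective dimension at most one are precisely the simples appearing as the socle of some projective-injective: in the non-self-injective case these are $S_{2},S_{4},\dots,S_{2n-2}$, and in the self-injective case these are the odd simples $S_{1},S_{3},\dots,S_{2n-1}$.

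For Part~(2), write $T=T'\oplus P$ with $P$ a non-injective indecomposable projective summand (necessarily of length $1$ or $2$). Reading off the Loewy structure, $I^{0}(\soc P)$ is the projective-injective whose socle is $\soc P$, and the canonical embedding $P\hookrightarrow I^{0}(\soc P)$ has simple cokernel. To confirm that this short exact sequence realises the tilting mutation at $P$, one verifies that every nonzero morphism from $P$ into a projective-injective factors through $P\hookrightarrow I^{0}(\soc P)$; combined with the fact that every projective-injective is a summand of every tilting $\Gamma$-module (since $\Gamma$ is Auslander $1$-Gorenstein), this identifies the displayed sequence as the minimal left $\add T'$-approximation required for mutation. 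Part~(3) then follows by iteration: starting from $T_{0}=\Gamma$ and mutating away each non-injective indecomposable projective summand, the process terminates at a module $T^{\ast}$ whose summands are exactly the projective-injectives together with the simples produced by the mutations, which by Part~(2) are precisely the simples of projective dimension one; this $T^{\ast}$ is minimal in $\tilt\Gamma$ because no further ``projective-to-simple'' mutations remain.

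The main obstacle is the factorisation claim inside Part~(2): one must show that \emph{every} morphism from $P$ into a projective-injective summand of $T'$ factors through $P\hookrightarrow I^{0}(\soc P)$, uniformly in the tilting partner $T'$. This reduces to a careful $\Hom$-space computation along the quiver, which is straightforward but must be handled systematically. As a back-up, Theorem~\ref{2.5} supplies an alternative route: the quotient $\Gamma/(e)$ is semisimple of rank $n-1$ (resp.\ $n$) in the two cases, so the combinatorics of support $\tau$-tilting modules over $\Gamma/(e)$ is trivial, directly confirming both the mutation description in Part~(2) and the minimality of $T^{\ast}$ in Part~(3).
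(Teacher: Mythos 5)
Your part (1) is fine and in fact takes a slightly different route from the paper: you classify the non-projective indecomposables of projective dimension at most one directly from the quiver of Proposition~\ref{3.1} and use $\pd_\Gamma T\le 1$, whereas the paper embeds every summand of $T$ into a projective-injective module (via \cite[Lemma 3.2]{Z1} and \cite[Proposition 10]{IwS}) and then reads off the possibilities from the Loewy structure; both arguments work and your list of admissible simples is correct. In part (2), however, there is a gap: a minimal left $\add(T\setminus P)$-approximation of $P$ must absorb the morphisms from $P$ into \emph{every} indecomposable summand of $T\setminus P$, and by part (1) these summands include non-injective projectives and simple modules, not only projective-injectives. Your stated ``main obstacle'' only treats targets that are projective-injective, so as written the approximation property is not established. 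The missing (and true) statement is that $\Hom_\Gamma(P,T')=0$ for every admissible indecomposable $T'\neq I^0(\soc P)$ -- this is exactly how the paper argues, using that the top of a length-two $P$ has projective dimension $2$ and hence, by (1), cannot occur in $T$ -- and you should also record why the mutation at $P$ exists at all (faithfulness of $T\setminus P$, \cite[Proposition 1.3]{CHU}). Once this vanishing is checked along the quiver, your argument closes.

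Part (3) has a more serious gap. Your iteration produces a tilting module $T^{\ast}$ and you declare it minimal ``because no further projective-to-simple mutations remain,'' but minimality in $(\tilt\Gamma,\le)$ is not the absence of one particular kind of mutation: you must rule out downward mutations at the simple summands of $T^{\ast}$ as well (note that $\Hom_\Gamma(S,Q)\neq 0$ when $S$ is the socle of a projective-injective summand $Q$, e.g.\ $S(2)$ and $P(4)$ in the non-self-injective case $n=3$, so there genuinely are exchange sequences starting at these simples and one has to check they go upward), and even then ``minimal element'' is not yet ``the minimal tilting module''; you would still need uniqueness of the sink, or comparability with all tilting modules, to conclude. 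The paper sidesteps all of this by quoting \cite[Theorem 3.4]{IZ2}: for an Auslander $1$-Gorenstein algebra the minimum of the tilting poset is $I^0(\Gamma)\oplus\Omega^{-1}\Gamma$, and here $\Omega^{-1}\Gamma$ is semisimple with summands of projective dimension one. Finally, your proposed back-up via Theorem~\ref{2.5} is overstated: the bijection with support $\tau$-tilting modules over the semisimple quotient $\Gamma/(e)$ counts (and, with more work, orders) the tilting modules, but it does not by itself produce the explicit exchange sequence $0\to P\to I^0(\soc P)\to S\to 0$ of part (2) nor the description of the minimum in part (3).
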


\begin{proof} (1) By \cite[Chapter V, Theorem 3.2]{AsSS}, $\Gamma$ is a Nakayama algebra with Loewy length at most $3$.
By \cite[Lemma 3.2] {Z1}, every tilting module $T$ can be a submodule of a projective module. And hence any indecomposable direct summand $M$ should be a submodule of a projective module. Since $\Gamma$ is an Auslander algebra, $M$ can be embedded into a projective-injective module by \cite[Proposition 10]{IwS}. Since $\Gamma$ is a Nakayama algebra Loewy length at most $3$, then it is not difficult to show that $M$ is either projective or the simple socle of an indecomposable projective-injective modules.

(2) Since $T\setminus P$ is faithful, then the mutation over $P$ always exists by \cite[Proposition 1.3]{CHU}. And hence we get an exact sequence $0\rightarrow P\rightarrow T_1\rightarrow U\rightarrow 0$ with $U$ indecomposable. If $P$ is simple projective, then one gets the assertion since $\Gamma$ is a Nakayama algebra. Otherwise, the length of $P$ should be $2$ by Proposition \ref{3.1}. Then the top of $P$ is of projective dimension $2$ which is not an indecomposable direct summand of $T$. Then by (1) we get $\Hom_\Gamma(P, T')=0$ for any $T'\not=I^0({\rm soc}P)$. And therefore the minimal left $\add{(T\setminus P)}$ of $P$ should be $0\rightarrow P\rightarrow I^0({\rm soc}P)\rightarrow S\rightarrow0$.

(3) By \cite[Theorem 3.4]{IZ2}, one gets that the minimal tilting module according to the partial order defined above should be $I^0(\Gamma)\oplus \Omega^{-1}\Gamma$. By Proposition 3.1, $\Gamma$ is an Auslander algebra with $\Omega^{-1}\Gamma$ semi-simple. So the assertion holds.
\end{proof}

To show the main result, we need the following observation.

\begin{proposition}\label{3.3} Let $A$ be a semi-simple algebra with $n$ simple modules. Then the number of support $\tau$-tilting $A$-modules is $2^n$.
\end{proposition}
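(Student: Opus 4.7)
The plan is to use that semi-simplicity forces the Auslander--Reiten translate to vanish, so the $\tau$-rigidity condition becomes trivial and the entire problem reduces to counting subsets of the set of simples. Concretely, if $A$ is semi-simple with simples $S_1,\ldots,S_n$, then every $A$-module is projective, so $\tau M=0$ for every $M\in\mod A$, and hence $\Hom_A(M,\tau M)=0$ automatically. Every module is $\tau$-rigid.

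Next I would analyze the quotients $A/(e)$ appearing in Definition \ref{2.4}. Any idempotent of a semi-simple algebra is, up to conjugation, a sum of primitive idempotents $e=\sum_{i\in I}e_i$ for some subset $I\subseteq\{1,\ldots,n\}$, and the quotient $A/(e)$ is again a semi-simple algebra whose simples are precisely $\{S_j:j\notin I\}$. So $|A/(e)|=n-|I|$, and a $\tau$-tilting $A/(e)$-module must have exactly $n-|I|$ pairwise non-isomorphic indecomposable summands chosen from the simples of $A/(e)$; this forces it to be $\bigoplus_{j\notin I}S_j$, a unique choice.

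Putting these together gives a bijection between support $\tau$-tilting $A$-modules and subsets $J:=\{1,\ldots,n\}\setminus I$ of $\{1,\ldots,n\}$, via $J\mapsto\bigoplus_{j\in J}S_j$, hence the count $2^n$. The one point that requires a moment of care is checking that distinct subsets really give distinct support $\tau$-tilting modules (i.e. that the underlying idempotent is recoverable from the module), but this is immediate since $J$ is simply the set of isomorphism classes of simple summands of the module. No single step here should be an obstacle; the main thing is to state cleanly that semi-simplicity makes $\tau$ vanish and then to invoke the cardinality condition $|M|=|A/(e)|$ in Definition \ref{2.4}.
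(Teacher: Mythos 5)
Your proof is correct and follows essentially the same route as the paper: identify support $\tau$-tilting modules over the semi-simple algebra $A$ with direct sums of pairwise non-isomorphic simples (equivalently, subsets of $\{1,\dots,n\}$), using that semi-simplicity makes every module projective and hence $\tau$-rigid, and that $A/(e)$ is again semi-simple. If anything, you are slightly more explicit than the paper about the converse direction, namely that \emph{every} support $\tau$-tilting module arises this way and that distinct subsets give non-isomorphic modules, which the paper leaves implicit.
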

\begin{proof} We only need to show that any direct sum of non-isomorphic simple modules is a support $\tau$-tilting $A$-module. Let $m$ be an arbitrary integer with $1\leq m\leq n$ and $S_i$ be a simple $A$-module for $1\leq i\leq n$. Then $\oplus_{i=1}^m S_i$ is also projective since $A$ is semi-simple. And hence $\oplus_{i=1}^m S_i$ is $\tau$-rigid. Let $e$ be the idempotent $1-\Sigma_{i=1}^m e_i$ with $e_i$ the idempotent according to the simple module $S_i$. Then $\oplus_{i=1}^m S_i$ is a $\tau$-tilting (in fact tilting module) over $A/(e)$. Notice that the module $0$ is support $\tau$-tilting, then the number of support $\tau$-tilting modules over $A$ should be $\Sigma_{m=0}^{n} C_{n}^{m}=2^n$. The assertion holds.
\end{proof}

For a vertex $i$ in a quiver $Q$, we denote by $P(i)$, $I(i)$ and $S(i)$ the indecomposable projective, injective and simple module according to the $i$, respectively. Now we are ready to show the following main result.

\begin{theorem}\label{3.4}Let $\Lambda$ be a radical square zero Nakayama algebra with $n$ simple modules and let $\Gamma$ be the Auslander algebra of $\Lambda$. If $\Lambda$ is self-injective, then the number of tilting $\Gamma$-modules is $2^n$; otherwise, the number of tilting $\Gamma$-modules is $2^{n-1}$.
\end{theorem}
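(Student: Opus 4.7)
The plan is to invoke Theorem \ref{2.5}, which reduces counting tilting $\Gamma$-modules to counting support $\tau$-tilting modules over $\Gamma/(e)$, where $e\Gamma$ is the additive generator of projective-injective $\Gamma$-modules (the hypothesis is satisfied because $\Gamma$, being an Auslander algebra, is in particular Auslander $1$-Gorenstein). In view of Proposition \ref{3.3}, it then suffices to show that $\Gamma/(e)$ is semi-simple with exactly $n-1$ simples (if $\Lambda$ is not self-injective) or $n$ simples (if $\Lambda$ is self-injective).

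First I read off the indecomposable projective $\Gamma$-modules from the quivers of Proposition \ref{3.1}, using that $\Gamma$ is a Nakayama algebra of Loewy length at most $3$. Tracking how the relations $a_{2k-1}a_{2k-2}=0$ (respectively $a_{2k}a_{2k-1}=0$) kill compositions of consecutive arrows in $Q_1$ (respectively $Q_2$), I expect to find: in the non-self-injective case, $P(i)$ has length $3$ precisely when $i\in\{4,6,\ldots,2n-2\}$ and length at most $2$ otherwise; in the self-injective case, $P(i)$ has length $3$ exactly when $i$ is odd and length $2$ when $i$ is even. Next I locate each indecomposable injective $I(j)$ as the longest uniserial module with socle $S(j)$, then compare tops and lengths to single out those projectives that are also injective. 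This will give $n$ projective-injective modules in each case: $\{P(2),P(4),\ldots,P(2n-2),P(2n-1)\}$ in the non-self-injective case and the $n$ odd-indexed projectives $\{P(1),P(3),\ldots,P(2n-1)\}$ in the self-injective case.

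Consequently the vertices surviving in $\Gamma/(e)$ are $\{1,3,\ldots,2n-3\}$ (cardinality $n-1$) or $\{2,4,\ldots,2n\}$ (cardinality $n$), respectively. Since every arrow of $Q_1$ and $Q_2$ has the form $a_i\colon i\to i-1$, each arrow connects vertices of opposite parity, and therefore no arrow of $\Gamma$ survives in $\Gamma/(e)$; hence $\Gamma/(e)$ is semi-simple with the desired number of simples, and combining Proposition \ref{3.3} with Theorem \ref{2.5} completes the proof. The main bookkeeping hurdle is the careful case analysis needed to pin down the projective-injective summands, particularly the boundary behaviour near the vertices $1$, $2$ and $2n-1$ of the linear quiver $Q_1$, where $P(2)$ has length $2$ (vertex $1$ being a sink) and $P(2n-1)$ is the unique odd projective-injective; once this is settled, semisimplicity of $\Gamma/(e)$ follows immediately from the parity observation on the arrows.
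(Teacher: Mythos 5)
Your proposal is correct and follows essentially the same route as the paper: read off the indecomposable projective-injective $\Gamma$-modules from the quivers in Proposition \ref{3.1} (namely $P(2),P(4),\dots,P(2n-2),P(2n-1)$ in the non-self-injective case and the odd-indexed projectives in the self-injective case), observe that killing the corresponding idempotent leaves a semi-simple algebra with $n-1$, respectively $n$, simples, and then combine Theorem \ref{2.5} with Proposition \ref{3.3}. Your parity argument on the arrows and the boundary analysis at the vertices $1$, $2$, $2n-1$ just make explicit the verification that the paper leaves as a direct reading of Proposition \ref{3.1}.
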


\begin{proof}
(1) The case of $\Lambda$ is self-injective.
By Proposition \ref{3.1}, one gets the indecomposable projective-injective $\Gamma$-modules are as follows:
$P(2n)=I(2n-1), P(2n-1)=I(2n-3), \cdots, P(3)=I(1), P(1)=I(2n-1)$. Take the idempotent $e=e_{2n}+e_{2n-1}+\cdots+e_1$. Then the factor algebra $\Gamma/(e)$ is a semi-simple algebra with $n$ simple modules. By Theorem \ref{2.5} and Proposition \ref{3.3}, we get the number of tilting modules over $\Gamma$ is $2^n$.

(2) The case of $\Lambda$ is non-self-injective.

By Proposition \ref{3.1}, one gets the indecomposable projective-injective modules are as follows:
$P(2n-1)=I(2n-2),P(2n-2)=I(2n-4),\cdots, P(4)=I(2), P(2)=I(1)$. Take the idempotent $e=e_{2n-1}+e_{2n-2}+\cdots+e_2$. Then the factor algebra $\Gamma/(e)$ is semi-simple with $n-1$ simple modules. Then by Theorem \ref{2.5} and Proposition \ref{3.3}, one gets that the number of tilting $\Gamma$-modules is $2^{n-1}$.
\end{proof}

Now we give an example to show our main results:

\begin{example} Let $\Lambda$  and $\Gamma$ be as in Proposition \ref{3.1} and let $n=3$.
\begin{itemize}
\item[\rm (1)] If $\Lambda$ is non-self-injective, then the tilting $\Gamma$-modules are as follows:

$T_1=\Gamma, T_2=P(5)\oplus P(4)\oplus S(4)\oplus P(2)\oplus P(1)$

$ T_3=P(5)\oplus P(4)\oplus P(3)\oplus P(2)\oplus S(2), T_4=P(5)\oplus P(4)\oplus S(4)\oplus P(2)\oplus S(2)$.

\item[\rm (2)] If $\Lambda$ is self-injective, then the tilting $\Gamma$-modules are as follows:

$T_1=\Gamma, T_2=S(1)\oplus P(5)\oplus P(4)\oplus P(3)\oplus P(2)\oplus P(1)$;

$T_3=P(6)\oplus P(5)\oplus S(5)\oplus P(3)\oplus P(2)\oplus P(1), T_4=P(6)\oplus P(5)\oplus P(4)\oplus P(3)\oplus S(3)\oplus P(1)$;

$T_5=S(1)\oplus P(5)\oplus S(5)\oplus P(3)\oplus P(2)\oplus P(1), T_6=P(6)\oplus P(5)\oplus S(5)\oplus P(3)\oplus S(3)\oplus P(1)$;

$T_7=S(1)\oplus P(5)\oplus P(4)\oplus P(3)\oplus S(3)\oplus P(1), T_8=S(1)\oplus P(5)\oplus S(5)\oplus P(3)\oplus S(3)\oplus P(1)$.

\end{itemize}
\end{example}

We end the paper with the following remarks.

\begin{remark}
\begin{itemize}

\item[\rm (1)] Let $\Gamma$ be the Auslander algebra of $K[x]/(x^2)$. Then it is showed in \cite{BHRR, IZ1} that the number of tilting modules over $\Gamma$ should be $2$ which is equal to $2^1$. So Theorem \ref{3.4} is also true.
\item [\rm (2)]Tilting modules of finite projective dimension over the the Auslander algebra of $K[x]/(x^n)$ have been classified by Geuenich \cite{Je}. However, the classification of tilting modules of finite projective dimension over the Auslander algebras of radical square zero Nakayama algebras is still open.
\item [\rm (3)] Adachi in \cite{A1} classified $\tau$-tilting modules over Nakayama algebras in terms of triangulations. It is also interesting to find the correspondence between tilting modules and triangulations.

\end{itemize}
\end{remark}

\end{document}